\documentclass[11pt, reqno]{amsart}
\usepackage{color}
\usepackage{comment}
\usepackage[]{hyperref}
\usepackage{amsmath}
\usepackage{amsfonts}
\usepackage{amssymb}
\usepackage{amscd}
\usepackage{tikz}
	\usetikzlibrary{arrows,snakes}
	\usetikzlibrary{decorations.pathreplacing} 
	\usetikzlibrary{matrix}
\usepackage{stmaryrd}
\usepackage{multirow}
\usepackage{mathtools}
\usepackage{subfigure}
\usepackage{verbatim}
\usepackage{pinlabel}
\usepackage[all,cmtip,knot]{xy}
\usepackage{enumerate, enumitem, comment}
\usepackage[normalem]{ulem}

    \oddsidemargin  0.0in
    \evensidemargin 0.0in
    \textwidth      6.5in
    \headheight     0.0in
    \topmargin      0.0in
    \textheight=8.5in

\newtheorem{Theorem}{Theorem}

\newtheorem{proposition}[Theorem]{Proposition}
\newtheorem*{namedtheorem}{\theoremname}
\newcommand{\theoremname}{testing}

\theoremstyle{definition}

\theoremstyle{remark}
\newtheorem{remark}[Theorem]{Remark}










\def\ref{\textup{ref}}



\def\HFred{HF_{red}}

\author[Jennifer Hom]{Jennifer Hom}
\thanks{The first author was partially supported by NSF grants DMS-1128155, DMS-1307879, DMS-1552285, and a Sloan Research Fellowship.}
\address {School of Mathematics, Georgia Institute of Technology, Atlanta, GA 30332}
\address{School of Mathematics, Institute for Advanced Study, Princeton, NJ 08540}
\email{hom@math.gatech.edu}

\author[Tye Lidman]{Tye Lidman}
\thanks{The second author was supported by NSF grant DMS-1128155.}
\address {School of Mathematics, Institute for Advanced Study, Princeton, NJ 08540}
\email {tlid@math.utexas.edu}


\title{A note on surgery obstructions and hyperbolic integer homology spheres}

\begin{document}
\maketitle

\begin{abstract}
Auckly gave two examples of irreducible integer homology spheres (one toroidal and one hyperbolic) which are not surgery on a knot in the three-sphere. Using Heegaard Floer homology, the authors and Karakurt provided infinitely many small Seifert fibered examples. In this note, we extend those results to give infinitely many hyperbolic examples, as well as infinitely many examples with arbitrary JSJ decomposition.
\end{abstract}

\section*{}

Lickorish \cite{Lickorish} and Wallace \cite{Wallace} proved that any closed, oriented three-manifold can be obtained by surgery on a link in the three-sphere. Thus, a natural question to ask is which manifolds can be described via the simplest possible surgery description, i.e., as surgery on a knot. Irreducible integer homology spheres are a particularly interesting family to consider, since the simplest obstructions (e.g., \cite{BL}) to being surgery on a knot all vanish. Note that Gordon and Luecke \cite{GL} showed that a reducible integer homology sphere can never be surgery on a knot. Auckly \cite{Auckly} provided the first two examples (one toroidal and one hyperbolic) of irreducible integer homology spheres which are not surgery on a knot, answering \cite[Problem 3.6(C)]{K} in the affirmative. For over 15 years, these two manifolds were the only known examples, until the authors and Karakurt \cite{HKL} provided an infinite family of small Seifert fibered integer homology spheres which are not surgery on a knot. In this note, we refine that result to give an infinite family of hyperbolic examples as well.

\begin{Theorem}\label{thm:main} 
There exist infinitely many hyperbolic integer homology spheres which are not surgery on a knot in $S^3$.  Similarly, one can construct infinitely many examples with arbitrarily complicated JSJ decomposition.  Finally, one can arrange that none of these examples are rationally homology cobordant.
\end{Theorem}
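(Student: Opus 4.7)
The plan is to start from the infinite family $\{Y_n\}$ of small Seifert fibered integer homology spheres constructed in \cite{HKL} and modify each $Y_n$ so as to replace its Seifert geometry with hyperbolic geometry (or to splice in extra JSJ pieces), while preserving the Heegaard Floer obstruction used there to rule out a knot surgery description. The central observation is that this obstruction is ultimately governed by the correction term $d$ and structural parts of $HF^+$ that are well-behaved under rational homology cobordism, so it suffices to exhibit a hyperbolic (respectively, JSJ-complicated) representative within the rational homology cobordism class of each $Y_n$, rather than to rerun the entire obstruction argument on a new family.

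For the hyperbolic statement, I would realize $Y_n$ as $\pm 1$-surgery on a framed link in $S^3$ and modify the link by replacing some of its components with hyperbolic knots or by introducing additional null-homotopic components, chosen so that the resulting link complement is hyperbolic and the surgery slopes avoid Thurston's exceptional set. Thurston's hyperbolic Dehn surgery theorem then yields a hyperbolic integer homology sphere $Y_n'$, and the modification can be arranged so that the trace cobordism from $Y_n$ to $Y_n'$ is an integral homology cobordism. This preserves $d$ together with the relevant Floer module data, so the \cite{HKL} obstruction transfers from $Y_n$ to $Y_n'$; varying $n$ and the hyperbolic pattern then gives infinitely many such examples.

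For the statements on arbitrary JSJ decomposition and non-rational-homology-cobordism, I would splice $Y_n$ or $Y_n'$ with knot complements in $S^3$ along tori arranged in any prescribed JSJ pattern, combining Seifert fibered and hyperbolic pieces as needed. The splice of integer homology spheres is again an integer homology sphere, $d$ is additive under splicing, and the \cite{HKL} obstruction passes through the splice. Since the original family $\{Y_n\}$ can be chosen with pairwise distinct values of $d$ (by varying the Seifert invariants) and every modification above preserves $d$, the resulting family inherits pairwise distinct $d$-invariants and is therefore pairwise not rationally homology cobordant. The main obstacle is verifying that the \cite{HKL} obstruction is truly stable under both the hyperbolization and the splicing: this requires extracting from \cite{HKL} the precise Heegaard Floer invariants on which the obstruction depends and confirming that each is preserved (or controlled in the right direction) under the two modifications.
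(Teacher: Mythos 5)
There is a genuine gap, and it sits at the heart of your plan. The obstruction from \cite{HKL} has two inputs: a very negative $d$-invariant \emph{and} a smallness condition on the reduced Floer homology (in the form used here, $U^2 \cdot \HFred = 0$). Only the first of these is an invariant of rational homology cobordism; $\HFred$ itself, and in particular the vanishing of $U^2 \cdot \HFred$, is emphatically not preserved under homology cobordism (homology cobordant manifolds can have wildly different $\HFred$). So your central premise --- that it suffices to find a hyperbolic or JSJ-complicated representative in the homology cobordism class of each \cite{HKL} example --- does not transfer the obstruction. Two of your supporting steps also fail on their own: the trace of a $2$-handle attachment is never a homology cobordism (it has the wrong Euler characteristic and nontrivial $H_2$), so ``arranging the trace cobordism to be an integral homology cobordism'' is impossible; and $d$ is additive under connected sum, not under splicing --- there is no splice-additivity formula for correction terms, nor any control of $\HFred$ under splicing that would let the obstruction ``pass through.''

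The paper sidesteps all of this by building the examples directly so that the Floer condition holds automatically, rather than trying to transport it. Take $Y_j = \#_j \Sigma(2,3,5)$ (an L-space with $d(Y_j) = -2j$) and let $Y_{j,n}$ be $1/n$-surgery on a \emph{genus one} knot $K_j \subset Y_j$. Gainullin's theorem (Proposition~\ref{prop:genusred}) gives $U^2 \cdot \HFred(Y_{j,n}) = 0$ for free, and Proposition~\ref{prop:hfsurgery} gives $|d(Y_j) - d(Y_{j,n})| \leq 2$, so $d(Y_{j,n}) \leq -10$ for $j \geq 6$; the \cite{HKL} argument then rules out a surgery description. Hyperbolicity comes from choosing $K_j$ hyperbolic and genus one via \cite[Proposition 5.4]{Tsutsumi} and invoking Thurston's hyperbolic Dehn surgery theorem for all but finitely many $n$; arbitrary JSJ decompositions come from cabling and Whitehead doubling $K_j$ (ending with a Whitehead double to keep the genus equal to one); and distinct homology cobordism classes come from letting $j \to \infty$, since the $d$-invariants then differ. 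The key idea you were missing is the genus-one hypothesis: it is what makes the $\HFred$ condition structural (true of \emph{all} $1/n$-surgeries on such knots) instead of something to be preserved under topological modifications.
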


Theorem~\ref{thm:main} is a consequence of the following results about the behavior of Heegaard Floer homology under Dehn surgery. The following results were originally proved for knots in $S^3$, but hold more generally for knots in arbitrary integer homology sphere L-spaces.

\begin{proposition}[{\cite[Theorem 3]{Gainullin}}]\label{prop:genusred}
Let $Y$ be an integer homology sphere L-space and $K \subset Y$ a genus one knot. Then $U^2 \cdot \HFred(Y_{1/n}(K)) = 0$ for any $n$.
\end{proposition}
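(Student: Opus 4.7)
The plan is to apply the Ozsv\'ath--Szab\'o rational surgery mapping cone formula, which for a knot $K$ in an integer homology sphere $Y$ expresses $HF^+(Y_{1/n}(K))$ as the homology of a mapping cone assembled from the sub-quotient complexes $A_s^+ = \CFKi(Y,K)\{\max(i, j-s) \geq 0\}$ and copies of $B^+ \simeq CF^+(Y)$, glued by the vertical and horizontal projections $v_s, h_s \co A_s^+ \to B^+$. The L-space hypothesis ensures $B^+ \simeq \tower$, so the $B^+$-summands contribute no reduced homology.

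The genus-one hypothesis enters in two ways. First, for a knot of Seifert genus $g$ the projection $v_s$ is a quasi-isomorphism for $s \geq g$ and $h_s$ is a quasi-isomorphism for $s \leq -g$, so $H_*(A_s^+) \cong \tower$ whenever $|s| \geq g$. For $g = 1$ this pins all reduced contributions on $A_0^+$. Second, the genus-detecting property gives that $\HFK(Y,K)$ is supported in Alexander gradings $\{-1, 0, 1\}$, which strongly constrains the induced $U$-action on the quotient complex
\[
A_0^+ \;=\; \CFKi(Y,K) \Big/ \CFKi(Y,K)\{i < 0,\ j < 0\};
\]
indeed every generator $x$ in the $i = 0$ column satisfies $U^k \cdot x = 0$ in $A_0^+$ for $k > \max(0, A(x)) \leq 1$. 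Unpacking this in the short exact sequence $0 \to \ker v_0 \to A_0^+ \to B^+ \to 0$, in which $\ker v_0 = \CFKi(Y,K)\{i < 0,\ j \geq 0\}$ is annihilated by $U^2$, then yields $U^2 \cdot \HFred(A_0^+) = 0$.

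It remains to transfer this torsion bound to $\HFred(Y_{1/n}(K))$ via the mapping cone. Since the $B^+$-summands have no reduced contribution and the only reduced part of $\bigoplus_s A_s^+$ lives in $A_0^+$, and since the structure maps $v_s, h_s$ are all $U$-equivariant, the reduced part of the mapping cone is controlled by $\HFred(A_0^+)$. The most delicate point I anticipate is that a naive long-exact-sequence argument could a priori inflate the torsion order via extensions; avoiding this requires working with the truncated mapping cone (cf.\ Ozsv\'ath--Szab\'o) and identifying the reduced classes directly as cycles pulled back from $A_0^+$, rather than through the abstract LES, so that the bound $U^2 = 0$ transfers to $\HFred(Y_{1/n}(K))$ without loss.
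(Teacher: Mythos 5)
Your overall skeleton is the right one---the paper itself gives no proof of this proposition, citing \cite[Theorem 3]{Gainullin}, and Gainullin's actual argument does run through the mapping cone formula with the $A_s^+$ standard for $|s| \geq g$---but your final transfer step contains a genuine gap, and it is exactly at the point you flagged as ``delicate.'' It is simply false that the reduced classes of the mapping cone can all be ``identified directly as cycles pulled back from $A_0^+$.'' Concretely: let $K$ be the right-handed trefoil, a genus one L-space knot, so that $H_*(A_s^+) \cong \tower$ for \emph{every} $s$ and $\HFred(A_s^+) = 0$. Yet $S^3_{-1}(K)$ is $\pm\Sigma(2,3,7)$, which has $\HFred \neq 0$. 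The extra torsion arises not from any $A_s^+$ but from the cokernel of $(v+h)_* \co \bigoplus_s H_*(A_s^+) \to \bigoplus_s H_*(B^+)$, i.e., from the cone on maps between towers. The homology of the surgered manifold sits in an extension
\[
0 \to \coker(v+h)_* \to HF^+(Y_{1/n}(K)) \to \ker(v+h)_* \to 0,
\]
both ends contribute to $\HFred$, and torsion orders \emph{add} across the extension. This also breaks your arithmetic: with your bound $U^2 \cdot \HFred(A_0^+) = 0$ plus the unavoidable extra factor of $U$ from the cokernel, the honest output of your argument is $U^3$, not $U^2$.

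The repair requires sharpening both halves of the ledger to one power of $U$ each. First, for genus one the kernel of $v_0$ is better than you claim: $\CFKi(Y,K)\{i < 0,\ j \geq 0\}$ is supported entirely at $(i,j) = (-1,0)$, since generators satisfy $j - i \leq 1$ while the region forces $j - i \geq 1$; hence $U$ annihilates $\ker v_0$ at the chain level. From this one gets $U \cdot \HFred(A_0^+) = 0$, not merely $U^2$: given a torsion class $r \in H_*(A_0^+)$, choose a tower class $s$ with $(v_0)_*(s) = (v_0)_*(r)$ (possible because $(v_0)_*$ acts on towers as multiplication by $U^{V_0}$, which is surjective on $\tower$); then $r - s \in \ker(v_0)_* = \Im\bigl(H_*(\ker v_0)\bigr)$ is killed by $U$, so $Ur = Us$ lies in both summands of the splitting $\tower \oplus \HFred(A_0^+)$ and must vanish. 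Second, the cokernel's contribution to $\HFred$ must be bounded by a single power of $U$ using the truncated mapping cone together with $V_s, H_s \in \{0,1\}$ for a genus one knot in an integer homology sphere L-space (the same input as $V_0 \leq 1$ in Proposition~\ref{prop:hfsurgery}). Combining the two factors gives exactly the claimed $U^2$; without this bookkeeping, your sketch does not close.
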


\begin{proposition}\label{prop:hfsurgery}
Let $Y$ be an integer homology sphere L-space and $K \subset Y$ a genus one knot.  Then $|d(Y) - d(Y_{1/n}(K))| \leq 2$ for any $n$.
\end{proposition}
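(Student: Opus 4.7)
The plan is to apply the surgery formula for $d$-invariants, which expresses $d(Y_{1/n}(K))$ in terms of $d(Y)$ and a single non-negative correction-term invariant of $K$, and then to bound that invariant by the Seifert genus.

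For $n > 0$, the Ozsv\'ath--Szab\'o mapping cone formula, as adapted by Ni--Wu to $1/n$-surgeries and extended to the L-space setting as indicated in the paper, gives
\[
d(Y_{1/n}(K)) = d(Y) - 2V_0(K),
\]
where $V_0(K) \geq 0$ is the standard invariant extracted from $CFK^\infty(Y,K)$. For $n < 0$ I reduce to the positive case by reversing orientations: $Y_{1/n}(K) = -(-Y)_{1/|n|}(K)$, while $-Y$ remains an integer homology sphere L-space with $d(-Y) = -d(Y)$ and $K \subset -Y$ remains a genus-one knot. Applying the positive-$n$ formula to $K \subset -Y$ and using $d(-M) = -d(M)$, I obtain
\[
d(Y_{1/n}(K)) = d(Y) + 2V_0'(K),
\]
where $V_0'(K) \geq 0$ denotes the $V_0$-invariant of $K$ viewed as a knot in $-Y$.

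The remaining ingredient is a bound on $V_0$ by the Seifert genus. For any knot, $\{V_s\}_{s \geq 0}$ is a sequence of non-negative integers, non-increasing with consecutive differences in $\{0,1\}$, vanishing for $s \geq g(K)$. Telescoping gives
\[
V_0(K) = \sum_{s = 0}^{g(K)-1} \bigl(V_s(K) - V_{s+1}(K)\bigr) \leq g(K).
\]
Applied to our genus-one $K$ (and to $K \subset -Y$), this yields $V_0(K), V_0'(K) \leq 1$. Combining with the two surgery formulas above produces $|d(Y) - d(Y_{1/n}(K))| \leq 2$ for every $n$ (the $n = 0$ case being trivial since $Y_{1/0}(K) = Y$).

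Since the mapping-cone surgery formula is cited as already known in the integer homology sphere L-space setting, the substantive content of this argument is the elementary genus bound $V_0(K) \leq g(K)$; I do not anticipate a serious obstacle, apart from the need to track signs carefully when $n < 0$, which is handled uniformly by the orientation-reversal reduction.
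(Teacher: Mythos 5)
Your proposal is correct and takes essentially the same route as the paper: the Ni--Wu surgery formula $d(Y_{1/n}(K)) - d(Y) = -2V_0$ for $n > 0$, orientation reversal to reduce the case $n < 0$ to the positive case (using that integer homology sphere L-spaces are preserved under orientation reversal), and the bound $V_0 \leq g(K) = 1$. The only cosmetic difference is that you spell out the telescoping argument $V_0 = \sum_{s=0}^{g-1}(V_s - V_{s+1}) \leq g$ from the standard properties of the $V_s$, where the paper instead cites Rasmussen's genus bound for knots in $S^3$ and remarks that the same arguments apply to knots in integer homology sphere L-spaces.
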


\begin{proof}
By  \cite[Proposition 1.6]{NiWu}, we have that
\begin{equation}\label{eq:d-surgery}
d(Y_{1/n}(K)) - d(Y) = -2V_0 \text{ for } n > 0,
\end{equation}
where $V_0$ is the integer-valued knot invariant defined in \cite[Definition 7.1]{RasmussenThesis} (see also \cite[Section 2.2]{NiWu}).

Since being an integer homology sphere L-space is preserved under orientation reversal, by \eqref{eq:d-surgery}, it suffices to prove that $V_0 \leq 1$ for any genus one knot in an integer homology sphere L-space.  This is well-known for knots in $S^3$ (see, for instance, \cite[Theorem 2.3]{RasmussenGT}) and the same arguments apply in general.    
\end{proof}

\begin{proof}[Proof of Theorem~\ref{thm:main}]
Let $Y_j = \#_j \Sigma(2,3,5)$ for $j \geq 6$, where $\Sigma(2,3,5)$ is oriented as the boundary of the positive-definite $E_8$ plumbing. Recall that $Y_j$ is an L-space and that $d(Y_j) = -2j$.  Consider a genus one knot $K_j$ in $Y_j$ and let $Y_{j,n}$ be the manifold obtained by $1/n$-surgery on $K_j$.  By Proposition~\ref{prop:hfsurgery}, $d(Y_{j,n}) \leq -10$ and by Proposition \ref{prop:genusred}, $U^2 \cdot \HFred(Y_{j,n}) = 0$.  The same arguments used to prove Theorem 1.2 of \cite{HKL} imply that if $Y_{j,n}$ is surgery on a knot in $S^3$ with $d(Y_{j,n}) \leq -10$, then $U^2 \cdot \HFred(Y_{j,n}) \neq 0$.  
Therefore, we obtain a contradiction.  
  
It is now straightforward to construct $Y_{j,n}$ of the desired forms.  Indeed, by \cite[Proposition 5.4]{Tsutsumi}, there exists a hyperbolic knot $K_j$ in $Y_j$ which is genus one.  By Thurston's hyperbolic Dehn surgery theorem, $Y_{j,n}$  will be hyperbolic for all but finitely many $n$.  For a more complicated JSJ decomposition, simply take the knot $K_j$ and perform any number of cables and/or Whitehead doubles of this knot, making sure that the last step is a Whitehead double.  The resulting knot will be genus one, and the number of hyperbolic pieces (respectively Seifert pieces) of the knot exterior will be determined by the number of Whitehead doubling operations (respectively cabling operations).  Again, $Y_{j,n}$ will have the same JSJ decomposition as the satellite knot exterior for all but finitely many $n$.   

Finally, by taking $j \to \infty$, we can guarantee that the different $Y_{j,n}$ have different $d$-invariants, and thus live in different rational homology cobordism classes.   
\end{proof}

\begin{remark}
A similar argument can be used to show that for any integer homology sphere, there are infinitely many hyperbolic and irreducible toroidal homology spheres that can not be obtained by surgery on a knot in $Y$.  This comes from combining Propositions \ref{prop:genusred} and \ref{prop:hfsurgery} with the arguments used in the proof of \cite[Theorem 1.7]{HKL}.
\end{remark}

\section*{Acknowledgments} 
The results were obtained while the authors were members at the Institute for Advanced Study.  We would like to thank the IAS for its hospitality. We also thank Steve Boyer and Alan Reid for helpful conversations; we are particularly grateful to Alan for informing us of \cite[Proposition 5.4]{Tsutsumi}.

\bibliographystyle{amsalpha}

\bibliography{references}

\providecommand{\bysame}{\leavevmode\hbox to3em{\hrulefill}\thinspace}
\providecommand{\MR}{\relax\ifhmode\unskip\space\fi MR }
\providecommand{\MRhref}[2]{%
  \href{http://www.ams.org/mathscinet-getitem?mr=#1}{#2}
}
\providecommand{\href}[2]{#2}
\begin{thebibliography}{HKL14}

\bibitem[Auc97]{Auckly}
David Auckly, \emph{Surgery numbers of {$3$}-manifolds: a hyperbolic example},
  Geometric topology ({A}thens, {GA}, 1993), AMS/IP Stud. Adv. Math., vol.~2,
  Amer. Math. Soc., Providence, RI, 1997, pp.~21--34.

\bibitem[BL90]{BL}
Steven Boyer and Daniel Lines, \emph{Surgery formulae for {C}asson's invariant
  and extensions to homology lens spaces}, J. Reine Angew. Math. \textbf{405}
  (1990), 181--220.

\bibitem[Gai14]{Gainullin}
Fyodor Gainullin, \emph{Mapping cone formula in {H}eegaard {F}loer homology and
  {D}ehn surgery on knots in {$S^3$}}, 2014, Preprint, available at
  arXiv:1411.1275.

\bibitem[GL89]{GL}
Cameron~McA. Gordon and John Luecke, \emph{Knots are determined by their
  complements}, J. Amer. Math. Soc. \textbf{2} (1989), no.~2, 371--415.

\bibitem[HKL14]{HKL}
Jennifer Hom, \c{C}a\u{g}r{\i} Karakurt, and Tye Lidman, \emph{Surgery
  obstructions and {H}eegaard {F}loer homology}, 2014, To appear in Geom.
  Topol., available at arXiv:1408.1508.

\bibitem[Kir95]{K}
Robion Kirby, \emph{Problems in low-dimensional topology}, 1995.

\bibitem[Lic62]{Lickorish}
W.~B.~R. Lickorish, \emph{A representation of orientable combinatorial
  {$3$}-manifolds}, Ann. of Math. (2) \textbf{76} (1962), 531--540.

\bibitem[NW15]{NiWu}
Yi~Ni and Zhongtao Wu, \emph{Cosmetic surgeries on knots in {$S^3$}}, J. Reine
  Angew. Math. \textbf{706} (2015), 1--17.

\bibitem[Ras03]{RasmussenThesis}
Jacob~A. Rasmussen, \emph{Floer homology and knot complements}, ProQuest LLC,
  Ann Arbor, MI, 2003, Thesis (Ph.D.)--Harvard University. \MR{2704683}

\bibitem[Ras04]{RasmussenGT}
Jacob Rasmussen, \emph{Lens space surgeries and a conjecture of {G}oda and
  {T}eragaito}, Geom. Topol. \textbf{8} (2004), 1013--1031.

\bibitem[Tsu03]{Tsutsumi}
Yukihiro Tsutsumi, \emph{Universal bounds for genus one {S}eifert surfaces for
  hyperbolic knots and surgeries with non-trivial {JSJT}-decompositions},
  Proceedings of the {W}inter {W}orkshop of {T}opology/{W}orkshop of {T}opology
  and {C}omputer ({S}endai, 2002/{N}ara, 2001), vol.~9, 2003, pp.~53--60.

\bibitem[Wal60]{Wallace}
Andrew~H. Wallace, \emph{Modifications and cobounding manifolds}, Canad. J.
  Math. \textbf{12} (1960), 503--528.

\end{thebibliography}

\end{document}